\documentclass[11pt,twoside,bezier]{article}
\usepackage{amssymb,amsthm}
\newcommand{\V}{\mathfrak{V}}
\newcommand{\U}{\mathfrak{U}}

\newcommand{\la}{\langle}
\newcommand{\ra}{\rangle}

\newcounter{ppp}
\theoremstyle{plain}\newtheorem{Thm}{Theorem}
\newtheorem{Lemma}{Lemma}

\usepackage{imakeidx}
\makeindex[name=g,title=Subject index]
\usepackage[columns=2]{idxlayout}

\begin{document}

\theoremstyle{plain}
\newtheorem{theo}{Theorem}[section]
\newtheorem{lm}[theo]{Lemma}
\newtheorem{cy}[theo]{Corollary}
\newtheorem{df}[theo]{Definition}
\newtheorem{remark}[theo]{Remark}
\newtheorem{prop}[theo]{Proposition}
\newtheorem{prob}[theo]{Problem}

\title{Finite groups and rings generating varieties of rapid growth}
\author{Alexander Olshanskii}
\date{}
\maketitle
\begin{abstract}
Let $A$ be a finite universal algebra.  Then the cardinalities of the $n$-generated free algebras $F_n$ in the variety (equational class) generated by $A$ satisfy G. Birkhoff's inequality: $|F_n|\le |A|^{|A|^n}$ for $n=1,2,\dots$ It follows that $\limsup_{n\to\infty}\sqrt[n]{\log |F_n|}\le |A|$. When $A$ is a finite group or a finite nonassociative 
algebra, we obtain a criterion for equality in this estimate; equivalently, a criterion for maximal growth of the sequence  $\{|F_n|\}_{n=1}^{\infty}$. 
\end{abstract}

\medskip

{\bf Key words:} finite groups, finite rings, varieties of groups and rings.

\medskip

{\bf AMS Mathematical Subject Classification:} 20E10, 20D99, 17A30, 08B20.

\section{Introduction}

Let $A$ be a finite algebra of a fixed signature. The variety of algebras $var\,A$ generated by $A$ consists of all algebras of the same signature satisfying every identity that holds in $A$. 
A basic theorem of G. Birkhoff \cite{B, HM} says that $var\;A$ is the smallest class of algebras
containing $A$ and closed under taking subalgebras, homomorphic images,
and Cartesian products.

The cardinality of every $n$-generated algebra in the variety $var\,A$ is bounded by
the cardinality of the $n$-generated free algebra $F_n$ in $var\;A$.  The algebra $F_n$ isomorphically
embeds in a direct power of the algebra $A$, and its cardinality
is bounded from above by the double-exponential function $|A|^{|A|^n}$, where
$|A|$ is the cardinality of the algebra $A$ (see \cite{B} or subsection 0.3 in \cite{HM}). It follows that $\limsup_{n\to\infty}\sqrt[n]{\log |F_n|}\le |A|$. We denote the left-hand side of this inequality by $pot\;A$ and call
it the {\it potential} of the finite algebra $A$. 

For example, if $G$ is a 
nontrivial finite group, then $pot\;G =1$ if $G$ is nilpotent
and $pot\;G \ge 2$ otherwise (\cite{HN}, 24.52). The same alternative holds for
finite-dimensional (non-associative) algebras over finite fields \cite{BO}.

As early as 1937, it was noticed by B.H. Neumann \cite{BN} that  Birkhoff's estimate can be improved in many cases. Our goal is to find necessary and sufficient conditions on $A$ under which the potential $pot\;A$ attains its maximal possible value, namely conditions equivalent to the equality  $pot\;A = |A|$, where $A$ is either a finite group or a finite, not necessarily associative, ring.

Recall that a nontrivial group (ring) $A$ is called {\it monolithic} if the intersection of all nontrivial normal subgroups (2-sided ideals) of $A$ is nontrivial.
This intersection is the {\it monolith} of the monolithic group (respectively, ring) $A$.

\begin{Thm}\label{th1} Let $G$ be a finite group.  
The equality
{\em pot}\;$G=|G|$ holds if and only if 

(*) $G$ is a monolithic group with a non-abelian monolith.

Moreover, under condition (*), there exist positive constants $c_1$ and $c_2$ such that\\
$c_1|G|^n <\log |F_n|< c_2 |G|^n$ for the $n$-generated free groups $F_n$ in the variety
$var\; G$.
\end{Thm}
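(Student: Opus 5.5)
The plan is to view $F_n$ as the subgroup of $G^{G^n}$ generated by the $n$ coordinate projections $x_i\colon G^n\to G$. Then $|F_n|$ is the number of distinct $n$-ary term functions $G^n\to G$. The upper bound $\log|F_n|\le |G|^n\log|G|$ is Birkhoff's estimate and gives $c_2$. The whole question is therefore when we get $\log|F_n|\ge c_1|G|^n$, and when instead $\log|F_n|=o(|G|^n)$ up to an exponential factor with smaller base.

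\emph{Sufficiency of (*).} Let $M$ be the non-abelian monolith, so $M=S_1\times\dots\times S_k$ with the $S_i$ isomorphic non-abelian simple groups permuted transitively by conjugation. Call a tuple $\bar g\in G^n$ \emph{generating} if $\langle g_1,\dots,g_n\rangle=G$; a fixed positive proportion of all $|G|^n$ tuples is generating once $n\ge d(G)$. The group $\mathrm{Aut}(G)$ acts freely on generating tuples, so there are $\ge c|G|^n$ orbits $O_1,\dots,O_N$. We must show that the image of $F_n$ under restriction to a set of orbit representatives $\bar a^{(1)},\dots,\bar a^{(N)}$ contains $M^N$; this gives $|F_n|\ge |M|^N$.

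This is the classical argument of Hall on generating $G^N$ (the "Eulerian function" argument). Let $H\le G^N$ be the image, a subdirect product generated by $n$-tuples. Then $H\cap M^N$ is normal in $H$. Since each coordinate projection is onto $G$, each coordinate projection of $H\cap M^N$ is normal in $G$. Commutators with $H$ show that this projection contains $[M,G]=M$, because $M$ is non-abelian and monolithic, so $M$ is perfect and $[M,G]=M$.

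A normal subgroup of $H$ lying in $M^N$ and projecting onto each factor is a product of diagonals for the simple factors $S_{i}^{(j)}$ of $M^N$. Any nontrivial identification between coordinates $j\ne j'$ forces an isomorphism $G\to G$ carrying $\bar a^{(j)}$ to $\bar a^{(j')}$. This uses that $M$ is the unique minimal normal subgroup, so the kernel of any diagonal-induced map is trivial, and that $G$ acts faithfully on $M$ (as $C_G(M)$ is normal and meets $M$ trivially, hence equals $1$). That contradicts the choice of distinct orbit representatives. Hence $H\supseteq M^N$, and $\log|F_n|\ge N\log|M|\ge c_1|G|^n$, so pot $G=|G|$. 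I expect this diagonal/Goursat step, which handles non-simple $M$ via faithfulness, to be the main obstacle.

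\emph{Necessity.} Suppose (*) fails. $G$ is a subdirect product of its monolithic quotients $G/K_i$. If $G$ is not monolithic, each $|G/K_i|<|G|$. Then $var\,G=var(\prod G/K_i)$ is the join of the $var(G/K_i)$, so $|F_n(G)|\le\prod_i|F_n(G/K_i)|\le \prod_i |G/K_i|^{|G/K_i|^n}$, and pot $G\le\max|G/K_i|<|G|$.

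If $G$ is monolithic with abelian monolith $M$, an elementary $p$-group, then $F_n$ has the normal subgroup $V$ of elements that are trivial in $G/M$ on all coordinates. Thus $|F_n|\le|F_n(G/M)|\cdot|V|$, and $V$ is an $\mathbb{F}_p$-module for $F_n(G/M)$. Its rank is at most the number of term functions $G^n\to M$ with prescribed behavior. A term with values in $M$ depends on $\bar g$ only through the cosets $\bar gM$ together with an affine dependence on the $M$-parts, because $M$ is abelian and the value is a product of conjugates of elements of $M$ by words in $\bar g$. This gives $\dim V\le |G/M|^n\cdot(1+n\dim M)$. Hence $\log|F_n|\le C n|G/M|^n$, and pot $G\le|G/M|<|G|$. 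Together these cases show that pot $G=|G|$ forces (*).
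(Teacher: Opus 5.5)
\textbf{Gap in the sufficiency part.} The gap is the claim that $\pi_j(H\cap M^N)=M$ for every coordinate $j$. Normality only shows that this projection is either trivial or contains $M$, and your commutator remark cannot decide which. Commutators of elements of $H\cap M^N$ with $H$ stay inside $H\cap M^N$, so they only re-prove normality. Commutators of arbitrary elements of $H$ need not lie in $M^N$ at all.

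If $G/M$ is solvable of derived length $\ell$, the step can be saved: $H^{(\ell)}\le M^N$ and $\pi_j(H^{(\ell)})=G^{(\ell)}\supseteq M$. But for, e.g., $G=A_5\wr A_5$ the group is perfect, every $H^{(k)}$ projects onto $G$, and iterated commutators never land in $M^N$.

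Note that $\pi_j(H\cap M^N)=1$ means exactly that $\pi_j$ factors through $H/(H\cap M^N)$, which is a subdirect product of copies of $G/M$. Ruling this out is a genuine statement, essentially that $G\notin var(G/M)$. This is where the paper's key Lemma 1 is needed: an epimorphism $\phi$ from a subdirect product $S\le H_1\times\dots\times H_k$ onto a group satisfying (*) factors through a single projection. Its proof: the images $\phi(S\cap H_i)$ are pairwise commuting normal subgroups of $G$, so all but one are trivial because the monolith is non-abelian; then drop a trivial factor and induct. With Lemma 1 your step is repaired, since otherwise you would obtain an epimorphism $G/M\to G$, which is impossible by order.

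Lemma 1 also makes your diagonal analysis unnecessary. As in the paper's Lemma 2, if $H\not\supseteq M_j$ then $H\cap G_j=1$. So $H$ embeds subdirectly in $\prod_{j'\ne j}G_{j'}$, and Lemma 1 gives $\pi_j=\psi\pi_{j'}$ with $\psi\in{\rm Aut}\,G$, contradicting the choice of orbit representatives. Your diagonal step itself is sound once nontriviality is known.

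\textbf{Necessity part.} Your necessity argument is correct and takes a different route from the paper. The paper partitions coordinates by their images in $(G/M)^n$ and applies Lemmas 4--5 (diagonal elements in sums of isomorphic modules). You instead bound the abelian kernel $V$ directly, using that $M$-valued term functions are affine in the $M$-components on each coset block $\bar t M^n$. This is shorter and gives the same $O(n|G/M|^n)$ bound. Only your count is slightly off: affine maps $M^n\to M$ form a space of dimension $\dim M\,(1+n\dim M)$, not $1+n\dim M$, which is harmless.
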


Many nonsimple groups satisfy condition (*); for example,
the symmetric groups $S_n$ for $n\ge 5$ and any wreath product of a non-cyclic finite
simple group and a nontrivial finite group.

A (not necessarily associative) non-zero ring $A$ is a {\it prime ring} if the
product of any two nonzero (2-sided) ideals of $A$ is nonzero. The following is an
analogue of  Theorem \ref{th1}.

\begin{Thm}\label{th2} Let $A$ be a (non-associative) finite ring. 
The equality {\em pot}\;$A=|A|$ holds if and only if $A$ is a prime ring.

Moreover, if $A$ is a prime ring, then there exist positive constants $c_1$ and $c_2$ such that\\
$c_1|A|^n <\log |F_n|< c_2 |A|^n$ for the $n$-generated free rings $F_n$ in the variety
$var\;A$.
 
\end{Thm}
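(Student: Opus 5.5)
Since $F_n$ embeds in $A^{A^n}$, the upper bound $\log|F_n|\le |A|^n\log|A|$ is Birkhoff's and gives $c_2$ at once. It remains to show two things. If $A$ is not prime, then $pot\,A<|A|$. If $A$ is prime, then $\log|F_n|\ge c_1|A|^n$.

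\emph{Necessity.} Suppose $A$ is not prime, so there are nonzero ideals $I,J$ with $IJ=0$. In $var\,A$, the free algebra $F_n$ is the subalgebra of $A^{A^n}$ generated by the coordinate functions $x_i$, where $x_i(a)=a_i$. I will compare $F_n$ with the free algebras of $var(A/I)$ and $var(A/J)$, using the map $A\to A/I\times A/J$.
\begin{itemize}
\item If $I\cap J=0$, then $A$ embeds in $A/I\times A/J$. Hence $|F_n(A)|\le |F_n(A/I)|\cdot|F_n(A/J)|$. Both factors are at most $\exp(c\,m^n)$ with $m<|A|$, so $pot\,A\le\max(|A/I|,|A/J|)<|A|$.
\item If $I\cap J\ne0$, then $K=I\cap J$ satisfies $K^2\subseteq IJ=0$ (or $JI=0$; one may replace the pair by $I\cap J$ and $I+J$). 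So there is a nonzero ideal $K$ with $K^2=0$.
\end{itemize}

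In the second case, fix $w\in F_n$ and write it as a polynomial function $A^n\to A$. Modulo $K$, $w$ is determined by an element of $F_n(A/K)$. The fibres are controlled by a bounded-degree "linearization": since $K^2=0$, a function $f$ with $f\equiv w\pmod K$ differs from $w$ by a $K$-valued function. That function is affine in displacements by elements of $K$ along cosets of $K^n$. It is therefore determined by its values on a set of coset representatives together with linear data, which is about $(|A|/|K|)^n\cdot\mathrm{poly}(n)$ parameters. This gives $\log|F_n|\le C(|A/K|^n n^{d})$, so $pot\,A\le|A/K|<|A|$.

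\emph{Sufficiency.} Let $A$ be prime. The goal is to produce about $|A|^n$ "independent" coordinates. For a prime finite ring I would first show that for any nonzero $a\in A$ there are multiplication polynomials (right and left multiplications) sending $a$ into any prescribed nonzero element. This is because the ideal generated by $a$, the annihilator structure, and primeness force the unique minimal-type behaviour. Using this, the plan is as follows.
\begin{enumerate}
\item For each point $p\in A^n$, build a term $t_p(x_1,\dots,x_n)$ that vanishes at every point of $A^n$ except $p$ and is nonzero at $p$. This is an interpolation/"delta function" term. It is built by iterating products of terms $x_i-a$ (with constants replaced by variables through an extra generator, or by absorbing them into suitable term functions). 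Primeness guarantees that a product of two functions each nonzero somewhere can be arranged to be nonzero at the common point.
\item Take sums $\sum_{p\in S}t_p$ over arbitrary subsets $S\subseteq A^n$ of size a fixed fraction of $A^n$. These give at least $2^{c|A|^n}$ distinct elements of $F_n$.
\end{enumerate}
Step 1 at a point $p$ with some coordinate $0$ needs care. I expect to need a constant number of extra variables, which changes only the constant $c_1$.

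The main obstacle is the construction of the delta terms in a ring without constants. Primeness must be turned into the statement that for nonzero term functions $f,g$ nonzero at $p$, some product expression $u(f,g)$ (with extra multiplications by variables) is nonzero at $p$ and vanishes wherever $f$ or $g$ vanishes. I would first try a finite descending argument: the set of values at $p$ of such combinations is an ideal-like set, and primeness rules out a zero product of the generated ideals. The linearization bound in the nilpotent-ideal case of the necessity is the secondary difficulty.
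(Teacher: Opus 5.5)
\textbf{Necessity is correct. Sufficiency has a genuine gap: its central step is unproved and, as formulated, false.}

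\textbf{Necessity.} Your argument takes a more direct route than the paper. Since $K^2=0$, every term function is affine on each coset $p+K^n$: in $\Delta(uv)=u(p)\Delta v+\Delta u\,v(p)+\Delta u\,\Delta v$ the last term lies in $K^2=0$. So a term function is determined by $w(p)$ and an additive map $K^n\to K$ on each of the $|A/K|^n$ cosets. Hence $\log|F_n|\le|A/K|^n(\log|A|+n|K|\log|K|)$. This replaces the paper's proof of Proposition 1$'$ via Lemmas 4$'$ and 5$'$. Your case $I\cap J=0$ is the ring version of Lemma 3.

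\textbf{Sufficiency: three concrete failures.} First, Step 1 cannot hold. Term functions commute with automorphisms, $t(\sigma p)=\sigma(t(p))$, so no term is nonzero at $p$ and zero at $\sigma p\ne p$. For example, in $\mathbb{F}_4$ one has $t(\omega^2)=t(\omega)^2$. Adding variables does not help if the delta property is demanded on all of $A^{n+c}$. Second, your product claim fails at tuples $p$ that do not generate $A$. Term values at $p$ lie in the subring $\langle p\rangle$, and your ``ideal-like set'' is then only an ideal of $\langle p\rangle$, which need not be prime. In $M_2(\mathbb{F}_2)$ at $p=E_{12}$, every product of term values is $0$. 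Third, your preliminary claim (multiplications carry any nonzero $a$ to any prescribed nonzero element) amounts to simplicity of $A$. It fails for the paper's prime nonsimple example, where from $f$ one reaches only multiples of $f$.

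\textbf{The missing idea.} Let the extra variables pin a generating tuple, and demand deltas only on that slice.
\begin{enumerate}
\item Fix $y^0\in A^c$ generating $A$ (with $c\le\log_2|A|$), work in $F_{n+c}$, and evaluate only at points $P=(p,y^0)$.
\item Since $P$ generates $A$, consider the terms obtained from $f$ by repeated left or right multiplication by arbitrary terms (including $f$ itself), and their $\mathbb{Z}$-combinations. Their values at $P$ fill the ideal of $A$ generated by $f(P)$, and all of them vanish wherever $f$ does.
\item Primeness, $(f(P))(g(P))\ne 0$, gives such $\ell_1$ for $f$ and $\ell_2$ for $g$ with $\ell_1(P)\ell_2(P)\ne0$. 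Then $\ell_1\ell_2$ vanishes wherever $f$ or $g$ does.
\item Iterate over the terms $x_i-a(y)$ with $a\ne p_i$, where $a(y^0)=a$. This gives $u_p$ that is nonzero at $(p,y^0)$ and zero at every $(q,y^0)$ with $q\ne p$. Automorphisms no longer interfere, because only the identity fixes $y^0$.
\item Sums over subsets of $A^n$ then give $|F_{n+c}|\ge 2^{|A|^n}$, that is, $\log|F_n|\ge|A|^{n-c}\log 2$, which yields $c_1$.
\end{enumerate}

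\textbf{Comparison with the paper.} Repaired this way, your route is a genuine alternative. The paper builds no delta terms. It proves (Lemmas 1$'$ and 2$'$) that the subring generated by generating tuples from pairwise distinct ${\rm Aut}\,A$-orbits contains $M^I$, where $M$ is the monolith. It then counts roughly $|A|^n/|{\rm Aut}\,A|$ such orbits. Your version needs only the condition $(a)(b)\ne0$ at generating tuples, with no monolith and no orbit counting.
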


Classical results in ring theory imply that every finite prime associative
ring is simple  (\cite{VDW}, Section 98). By contrast, the following construction gives
a nonsimple prime algebra over a field with basis $(e,f)$ and multiplication rules
 $e^2 = f$, $ef=f$, $fe=0$, and $f^2=f$. It has only one one-dimensional ideal
$\langle f\rangle$. 

\section{Sufficiency in Theorem 1}\label{s}

\begin{Lemma}\label{lem1} Let $G$ be a finite monolithic group with
a non-abelian monolith. Let $H$ be a direct product $H_1\times\dots\times H_k$ with projections $\pi_i : H \to H_i$. Let $S$ be a subdirect
product, i.e. a subgroup of $H$ such that $\pi_i(S) = H_i$ for all $i$, with an epimorphism $\phi : S \to G$.
Then there exist an index $i$ and an epimorphism $\psi: H_i \to G$ such that $\phi = \psi\pi_i$.
\end{Lemma}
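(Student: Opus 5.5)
The plan is to show that $\ker\phi$ contains $\ker\pi_i\cap S$ for some $i$. Then $\phi$ factors through $\pi_i(S)=H_i$, i.e.\ $\phi=\psi\pi_i$ for a homomorphism $\psi:H_i\to G$, which is onto because $\phi$ is. Write $N_i=\ker\pi_i\cap S$. These are normal subgroups of $S$ with $\bigcap_i N_i=1$. Let $K=\ker\phi$ and let $M$ be the monolith of $G$, and put $L=\phi^{-1}(M)$. Since every nontrivial normal subgroup of $G$ contains $M$, it suffices to find $i$ with $\phi(N_i)=1$. Suppose, for contradiction, that $\phi(N_i)\ne 1$ for all $i$. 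Then each $\phi(N_i)$ is a nontrivial normal subgroup of $G=\phi(S)$, hence $\phi(N_i)\supseteq M$ for every $i$.

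The key step uses that $M$ is non-abelian. Since $M$ is a minimal normal subgroup of $G$ and non-abelian, we have $[M,M]=M$, and more generally $[M,G]=M$. I will proceed by induction on $k$ using commutators. Let $D_j=N_1\cap\dots\cap N_j$. The claim is that $\phi(D_j)\supseteq M$ for all $j$. The inductive step goes as follows. Given $\phi(D_{j})\supseteq M$ and $\phi(N_{j+1})\supseteq M$, we get
\[
\phi([D_j,N_{j+1}])=[\phi(D_j),\phi(N_{j+1})]\supseteq[M,M]=M .
\]
Moreover $[D_j,N_{j+1}]\subseteq D_j\cap N_{j+1}=D_{j+1}$, because both subgroups are normal in $S$. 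Hence $\phi(D_{j+1})\supseteq M$. For $j=k$ this gives $M\subseteq\phi(D_k)=\phi(1)=1$, which contradicts the nontriviality of $M$. So some $\phi(N_i)=1$, i.e.\ $N_i\subseteq K$.

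It remains to verify the factorization. If $\pi_i(s)=\pi_i(s')$ for $s,s'\in S$, then $s^{-1}s'\in N_i\subseteq K$, so $\phi(s)=\phi(s')$. Thus $\psi(\pi_i(s)):=\phi(s)$ is a well-defined homomorphism on $\pi_i(S)=H_i$. It is an epimorphism onto $G$, and $\phi=\psi\pi_i$.

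There is no real obstacle here. The only point needing care is the identity $[M,M]=M$, which holds because $[M,M]$ is characteristic in $M$ and hence normal in $G$. It is nontrivial since $M$ is non-abelian, so minimality forces $[M,M]=M$. I also need the fact that the image of a commutator subgroup under $\phi$ is the commutator of the images, which is immediate. Note that abelianness of $M$ is exactly where the argument would fail, consistent with condition (*).
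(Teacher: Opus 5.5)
Your proof is correct, but it is organized differently from the paper's.

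The paper works with the intersections $S\cap H_i$ with the direct factors. Their images $\phi(S\cap H_i)$ are normal in $G$ and centralize each other pairwise, so at most one of them is nontrivial, because $M$ is non-abelian. The paper then drops a factor whose image is trivial and passes to the projection $S'$ of $S$ onto the remaining $k-1$ factors. It checks that $\phi\pi^{-1}$ is a well-defined map on $S'$ and inducts on $k$.

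You use the complementary subgroups $N_i=S\cap\ker\pi_i$ instead, whose intersection is trivial. You show directly that $\ker\phi$ contains one of them, by an inner induction using $[D_j,N_{j+1}]\le D_{j+1}$.

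What your route buys:
\begin{itemize}
\item There is no reindexing and no auxiliary subdirect product $S'$ with an induced epimorphism.
\item It isolates the property actually used: $\ker\phi$ is a ``prime'' normal subgroup of $S$. That is, if $[A,B]\le\ker\phi$ for normal subgroups $A,B$ of $S$, then $A\le\ker\phi$ or $B\le\ker\phi$.
\end{itemize}

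You do not need the full identity $[M,M]=M$; $[M,M]\ne 1$ is enough. Indeed $\phi(D_{j+1})$ is normal in $G$, so once it is nontrivial it automatically contains $M$. This is the same use of non-abelianness that the paper makes.

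Both arguments transfer to the ring case equally easily, using products of ideals (respectively two-sided annihilation) in place of commutators (respectively centralizing).
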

\proof By induction on $k$; this is trivial when $k \le 1$; assume $k \ge 2$. The subgroups $\phi(S\cap H_i)$ are
normal and pairwise centralize each other, so at most one of them is nontrivial since the monolith of $G=\phi(S)$ is non-abelian. After reindexing,
we may assume $\phi(S\cap H_k)$ is trivial. Let $\pi$
be the projection $S \to H_1\times\dots\times H_{k-1}$ and $S' = \pi(S)$.
Then $\phi\pi^{-1}$ is a well-defined epimorphism $S'\to G$ since
$\ker\pi = S\cap H_k$ and $\phi(S\cap H_k)=\{1\}$.
By induction, there exists $i < k$ and an epimorphism $\psi: H_i \to G$ such that $\phi\pi^{-1}= \psi\pi'_i$, where $\pi'_i$ is the projection of
$H_1 \times\dots\times H_{k-1}$ onto $H_i$. Hence $\phi = \psi\pi'_i\pi =\psi\pi_i$, as required.
\endproof

\begin{Lemma}\label{lem2} Let $G$ be a finite monolithic 
group with non-abelian monolith $M$. 
Let $G^I$ be a finite direct power of $G$, with direct
factors $G_i$ isomorphic to $G$ for $i\in I$. For
a fixed $n\ge 1$, let $x_1,\dots,x_n\in G^I$ and 
$x_{1i},\dots x_{ni}$ be their projections in $G_i$. 
Assume that the elements $x_{1i},\dots x_{ni}$ generate $G_i$ for $i\in I$ and
suppose the $n$-tuples $(x_{1i},\dots x_{ni})$, $i\in I$, lie in pairwise distinct
(${\rm Aut}\, G$)-orbits on $G^n$. Then the subgroup $B$ of $G^I$ generated
by $x_1,\dots,x_n$ contains $M^I$.
\end{Lemma}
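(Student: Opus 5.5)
Since the $n$-tuples generate each factor, $B$ is a subdirect product of the groups $G_i$, $i\in I$. The plan is first to show that for each fixed $j\in I$ the subgroup $B\cap G_j$ (where $G_j$ is identified with the $j$-th coordinate subgroup of $G^I$) projects onto a nontrivial normal subgroup of $G_j$. From this, $M_j\le B$, where $M_j$ is the copy of $M$ in $G_j$. Taking the product over $j$ then gives $M^I\le B$.

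Fix $j$. Let $\pi_j\colon B\to G_j$ be the projection and $\pi'\colon B\to\prod_{i\ne j}G_i$ the projection onto the remaining coordinates. Note that $\ker\pi'=B\cap G_j$. Suppose $B\cap G_j$ is trivial. Then $\pi'$ is injective, so $\phi=\pi_j(\pi')^{-1}$ is a well-defined epimorphism from $S=\pi'(B)$ onto $G_j\cong G$. Here $S$ is a subdirect product of the $G_i$, $i\neq j$. By Lemma~\ref{lem1}, there are an index $i\ne j$ and an epimorphism $\psi\colon G_i\to G_j$ with $\phi=\psi\pi_i$ on $S$. Since $G_i\cong G_j\cong G$ are finite, $\psi$ is an isomorphism, which we may view as an automorphism of $G$. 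Evaluating on the generators, $\psi(x_{ki})=x_{kj}$ for $k=1,\dots,n$. Thus the tuples at positions $i$ and $j$ lie in the same $({\rm Aut}\,G)$-orbit, contradicting the hypothesis. Hence $N_j=B\cap G_j$ is a nontrivial subgroup of $G_j$. The degenerate case $|I|=1$ is immediate, since then $B=G$.

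Next, $N_j$ is normal in $B$, and conjugation by elements of $B$ acts on the coordinate $G_j$ through $\pi_j(B)=G_j$. Therefore $N_j$ is a nontrivial normal subgroup of $G_j$, and so it contains the monolith $M_j$. Doing this for every $j\in I$ gives $M^I=\prod_j M_j\le B$.

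The main point to check carefully is the application of Lemma~\ref{lem1}. One must verify that $S$ is genuinely a subdirect product of the remaining factors and that $\phi$ is surjective onto $G$. Both follow from the generation hypothesis at each coordinate. One must also verify that the identity $\phi=\psi\pi_i$ on $S$ transfers to the generators, as $\psi(x_{ki})=x_{kj}$. Everything else is routine.
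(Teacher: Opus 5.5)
Your proof is correct and is essentially the paper's argument. Both proofs use that $B\cap G_j$ is normal in $G_j$ because $B$ is subdirect, and both observe that if $B\cap G_j=1$ then $B$ embeds subdirectly in $\prod_{i\ne j}G_i$, so Lemma~\ref{lem1} yields an isomorphism $G_i\to G_j$ carrying one generating tuple to the other, contradicting the distinct $({\rm Aut}\,G)$-orbits. The only difference is ordering: the paper starts the contradiction from $M_i\not\le B$, while you first prove $B\cap G_j\ne 1$ and then invoke the monolith.
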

\proof Arguing by contradiction, assume that $B$ does not contain the copy $M_i$ of the monolith
$M$ for some $i\in I$. Since $M_i$ is the
monolith  of $G_i$, it follows that $B\cap M_i = \{1\}$, because the subgroup $B\cap M_i$
of the subdirect product $B$ must be normal in the factor $G_i$. Therefore
$B\cap G_i = \{1\}$ since nontrivial normal subgroups of $G_i$ contain $M_i$.
Hence $B$ can be subdirectly embedded in the product $\prod_{j\ne i} G_j$. 

By Lemma \ref{lem1}, the projection $\pi_i: B\to G_i$ factors as 
$\psi\pi_j$, where $j\ne i$ and $\psi$ is an isomorphism $G_j\to G_i$
Thus, the $n$-tuple $(x_1,\dots,x_n)$ is mapped 
to $(x_{1i},\dots,x_{ni})$ under $\pi_i$ and it is mapped to $(x_{1j},\dots,x_{nj})$
under $\psi$. It follows that the isomorphism $\psi$ maps $(x_{1j},\dots,x_{nj})$
to $(x_{1i},\dots,x_{ni})$, contrary to the assumption that these $n$-tuples
belong to different ${\rm Aut}\,G$-orbits. The lemma is proved by contradiction.
\endproof

\medskip

{\bf Remark 1.} {\em For finite simple, non-abelian groups $G$, the statement of Lemma \ref{lem2}
has been known for a long time, see \cite{PH}.}

\medskip

For each proper subgroup $H\le G$, we have $|H|\le |G|/2$ and the number of $n$-tuples of elements from $H$ is equal to $|H|^n$. If $s$ is the number of proper subgroups of $G$, then the number of $n$-tuples generating $G$ is at least $|G|^n-s(|G|/2)^n=|G|^n(1-O(2^{-n}))$.
Thus, for some constant $c>0$ and all sufficiently large $n$, there are at least $\frac{|G|^n}{|{\rm Aut}\,G|}\left(1-O(2^{-n})\right)> c|G|^n $ 
$n$-tuples lying in pairwise distinct (${\rm Aut}\; G$)-orbits. By Lemma \ref{lem2},
there is an $n$-generated subgroup $B=B(n)$ of order at least $|M|^{c|G|^n}$ in some $G^I$.
Since $B(n)\in var\; G$, we obtain
$\log|F_n|\ge\log |B(n)|> c_1 |G|^n$ for some $c_1>0$ and every $n\ge 1$, as required. Birkhoff's inequality
 implies $\log|F_n|< c_2 |G|^n$ for some $c_2>0$.  This proves the sufficiency part of Theorem \ref{th1}.

 \section{Necessity in Theorem 1}

 \begin{Lemma} \label{lem3} Let $G_1$ and $G_2$ be finite groups. 
 Assume that a finite group $G$ generates the variety $\V= var\;(G_1,G_2)$. Then
 {\em pot}\;$G = \max(pot\;G_1,pot\;G_2)$. In particular, if $G$ is not monolithic, then
 {\em pot}\;$G=$ {\em pot}\;$(G/N)\le |G/N|$ for a non-trivial normal subgroup $N$ of $G$.
\end{Lemma}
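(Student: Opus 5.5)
The argument compares the free groups of the variety $\V=var\,(G_1,G_2)$ with those of $var\,G_1$ and $var\,G_2$. Denote by $F_n$, $F_n^{(1)}$ and $F_n^{(2)}$ the $n$-generated free groups in $\V$, $var\,G_1$ and $var\,G_2$ respectively. Since $G$ generates $\V$, $F_n$ is also the $n$-generated free group of $var\,G$, so pot$\,G$ is computed from the sequence $|F_n|$.

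\emph{Upper bound.} The identities of $\V$ are exactly those holding in both $G_1$ and $G_2$. Hence the free group $F_n$ embeds as a subdirect product into $F_n^{(1)}\times F_n^{(2)}$: map each free generator to the pair of free generators, and note that the kernel of the natural map is trivial because a word trivial in both relatively free groups is an identity of $\V$. This gives $|F_n|\le |F_n^{(1)}|\,|F_n^{(2)}|$, so
\[
\log|F_n|\le 2\max\bigl(\log|F_n^{(1)}|,\log|F_n^{(2)}|\bigr).
\]
Taking $n$-th roots, the factor $2^{1/n}$ tends to $1$, and since $\limsup\sqrt[n]{\max(a_n,b_n)}=\max(\limsup\sqrt[n]{a_n},\limsup\sqrt[n]{b_n})$, we get pot$\,G\le\max($pot$\,G_1,$pot$\,G_2)$. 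Some care is needed when some $\log|F^{(j)}_n|$ is $0$, i.e. when $G_j$ is trivial; this case is harmless.

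\emph{Lower bound.} Each $F_n^{(j)}$ is an $n$-generated group in $\V$, so it is a homomorphic image of $F_n$. Therefore $|F_n|\ge|F_n^{(j)}|$, and this gives the reverse inequality. Together the two bounds prove the equality pot$\,G=\max($pot$\,G_1,$pot$\,G_2)$.

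\emph{The ``in particular'' part.} If $G$ is not monolithic, there are nontrivial normal subgroups $N_1,N_2$ with $N_1\cap N_2=\{1\}$. (If $G$ is trivial the statement is vacuous, so assume it is not.) Then $G$ embeds into $G/N_1\times G/N_2$, and both quotients lie in $var\,G$. Hence $var\,G=var\,(G/N_1,G/N_2)$. By the first part, pot$\,G$ equals pot$\,(G/N)$ for $N=N_1$ or $N=N_2$. Birkhoff's bound applied to $G/N$ then gives pot$\,(G/N)\le|G/N|$.

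The only delicate point is justifying the subdirect embedding of $F_n$ in the upper bound, and that is standard.
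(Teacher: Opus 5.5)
Your proof is correct and follows the paper's argument essentially line for line. The subdirect embedding $F_n\hookrightarrow F_n^{(1)}\times F_n^{(2)}$ gives the upper bound, and your $2^{1/n}$ estimate simply proves the paper's $\limsup$ identity. Homomorphic images give the lower bound, and a pair of nontrivial normal subgroups with $N_1\cap N_2=\{1\}$ reduces the non-monolithic case to the first part. One wording point: for trivial $G$ the ``in particular'' clause is not vacuous, since no nontrivial $N$ exists; it simply presupposes $G\neq\{1\}$, as the paper tacitly does.
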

  \proof Clearly, $pot\;G\ge\max (pot\;G_1,pot\;G_2)$. 
 On the other hand, the $n$-generated $\V$-free group $F_n$ is a subdirect product
 of the free groups $F_n'$ and $F_n''$ in the varieties $var\; G_1$ and $var\; G_2$
 (\cite{HN}, 15.82), and hence $|F_n|\le |F'_n||F''_n|$. To estimate $pot\;G$ from above, it remains only to
 note that, for any sequences of positive integers $\{a'_n\}_{n=1}^{\infty}$ and 
 $\{a''_n\}_{n=1}^{\infty}$, we have
 
 $$\limsup_{n\to\infty}\sqrt[n]{\log (a'_na''_n)}=  \limsup_{n\to\infty}\sqrt[n]{\log a'_n+\log a''_n}$$ $$= \max (\limsup_{n\to\infty}\sqrt[n]{\log a'_n}, \; \limsup_{n\to\infty}\sqrt[n]{\log a''_n}).$$
 
 If the group $G$ is not monolithic, there are two nontrivial
 normal subgroups $N_1$ and $N_2$ in $G$ such that $N_1\cap N_2 = \{1\}$. Then $G$
 is a subdirect product of the factor groups $G_1=G/N_1$ and $G_2 = G/N_2$,
 and $var\; G = var\;\{G_1,G_2\}$. Hence $pot\;G = \max(pot\;G_1,pot\;G_2)$,
 which completes the proof.
 \endproof

\begin{Lemma}\label{lem4} Let $Q$ be a group and $N$ be a (right) $Q$-module.
Assume that $N=\oplus_i M_i$ is a direct sum of isomorphic
finite $Q$-submodules $M_i$ of cardinality $m$. Then any $n$-generated
submodule $L$ of $N$ has cardinality at most $m^{nm}$.
\end{Lemma}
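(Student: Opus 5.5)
We will prove that $L$ embeds as a $Q$-submodule into $M^{k}$ for some $k\le m$, where $M$ is a fixed module isomorphic to all the $M_i$. This gives $|L|\le |M|^{m}\cdot$(something). In fact we will obtain the sharper bound $|L|\le m^{nm}$ directly from a more careful embedding into $M^{nm}$, or more simply into $M^{m^n}$ restricted by linear-algebra counting. Since the $M_i$ are all isomorphic to $M$, the direct sum $N=\oplus_i M_i$ is isomorphic to $\oplus_{i} M$. We identify each element $v\in N$ with its family of coordinates $(v_i)_i$, where $v_i\in M$.

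Let $L$ be generated by $y_1,\dots,y_n$, and put $T=\{(y_{1i},\dots,y_{ni}) : i\}\subseteq M^n$. The set $T$ has at most $m^n$ distinct elements. Consider the map
\[
\Phi\colon L\to \prod_{t\in T} M .
\]
Every element $\ell\in L$ can be written as $\ell=\sum_j y_j a_j$ with $a_j\in \mathbb{Z}Q$. Its coordinate $\ell_i=\sum_j y_{ji}a_j$ depends only on the tuple $t=(y_{1i},\dots,y_{ni})$ and on the $a_j$. Therefore $\ell_i=\ell_{i'}$ whenever the tuples at $i$ and $i'$ coincide. So $\ell$ is determined by its values on $T$, and we define $\Phi(\ell)_t=\ell_i$ for any $i$ with tuple $t$. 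This map is well defined, injective, and a $Q$-homomorphism. Hence $L$ embeds in $M^{|T|}$ as the submodule generated by $n$ elements. The resulting bound $m^{m^n}$ is too weak, so the counting must instead be organised around the generators.

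The key step, and the main obstacle, is to obtain the exponent $nm$ rather than $m^n$. The plan is to replace the tuple set $T$ by a small subset of indices on which $L$ still embeds. Consider the submodule $L$ embedded in $M^{T}$, and project it to coordinates one at a time. We choose indices $t_1,t_2,\dots$ greedily so that each added coordinate strictly enlarges the image, that is, so that the kernel of the projection to the chosen coordinates strictly shrinks. We then bound the number of steps. For each $j$, the restriction of the tuple to the $j$-th generator lies in $M$, so the vectors $(y_{j t})_{t}$ take at most $m$ distinct values.

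It is cleaner to do this generator by generator. Write $L=\sum_j y_jR$ with $R=\mathbb{Z}Q$. Each cyclic submodule $y_jR$ has coordinates $y_{ji}a$ determined by $y_{ji}\in M$, which takes at most $m$ values. By the well-definedness argument above with $n=1$, $y_jR$ therefore embeds in $M^{m}$, so $|y_jR|\le m^m$. Finally, $L=y_1R+\dots+y_nR$ is the image of $\oplus_j y_jR$ under the sum map, which gives $|L|\le \prod_j|y_jR|\le m^{nm}$, as required. Thus the obstacle dissolves once we treat the cyclic pieces separately instead of the full $n$-tuples.
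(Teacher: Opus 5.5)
Your final paragraph is a correct proof and is essentially the paper's argument. Grouping the coordinates of a single generator $y_j$ by their at most $m$ distinct values is exactly the paper's decomposition of an element into at most $m$ ``diagonal'' elements, each generating a copy of a cyclic submodule of $M$; this gives $|y_jR|\le m^m$ and then $|L|\le m^{nm}$.

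The earlier material is never used and should be deleted. This includes the opening claim that $L$ embeds in $M^k$ with $k\le m$, which is false in general (take $Q$ trivial, $M=\mathbb{Z}/2$, $n\ge 3$), and the greedy scheme.
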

\proof There are module isomorphisms $\alpha_{ij}: M_i\to M_j$
such that $\alpha_{jk}\alpha_{ij}=\alpha_{ik}$ for all
$i,j,k$.  If $a\in M_i$ and the indices $i,i_1,...,i_k$
are pairwise different,  then the 'diagonal-type' sum \\
$b=a + \alpha_{ii_1}(a)+\dots+\alpha_{ii_k}(a)$ generates
a $Q$-submodule isomorphic to the submodule generated
by $a$, and so $b$ generates a submodule of order at most $m$.
Since every element $a'$ of $M_j$ equals $\alpha_{ij}(a)$ for some $a\in M_i$, any element of $N$ can be decomposed as a
sum of at most $m$ 'diagonal' elements. Hence every
one-generated submodule of $N$ has order at most $m^m$,
and so $|L|\le (m^m)^n$, as required. \endproof

\begin{Lemma} \label{lem5} Let $G$ be a finite group and $N$ an abelian normal subgroup of a group $H\in var\, G$, on which the finite group $R=H/N$ acts by conjugation,
and $N=\oplus_i M_i$ be a direct sum of isomorphic
finite $R$-submodules $M_i$ of cardinality $m$. Then for every $n\ge 1$, any $n$-generated subgroup $S$ of $H$  has order at most $|G|^{|G|^{\log_2 |R|}}m^{nm}$.
\end{Lemma}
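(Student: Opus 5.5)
Let $S=\la s_1,\dots,s_n\ra\le H$. We split $|S|=|SN/N|\cdot|S\cap N|$ and bound the two factors separately.

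\textbf{The quotient $SN/N$.} This is a subgroup of $R$, so it has a generating set of at most $\log_2|R|$ elements, since every subgroup chain in $R$ has length at most $\log_2|R|$. Choose $d\le\log_2|R|$ elements $t_1,\dots,t_d\in S$ whose images generate $SN/N$. The subgroup $T=\la t_1,\dots,t_d\ra$ lies in $var\,G$. It is therefore a $d$-generated group in that variety, so by Birkhoff's inequality $|T|\le |G|^{|G|^d}\le |G|^{|G|^{\log_2|R|}}$. Moreover $TN=SN$, and so $S\subseteq TN$.

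\textbf{The intersection $S\cap N$.} Each generator can be written $s_k=w_k(t)\,a_k$, where $w_k(t)\in T$ and $a_k\in N$. We need $a_k\in S\cap N$: since $s_k\in S$ and $w_k(t)\in T\le S$, indeed $a_k=w_k(t)^{-1}s_k\in S\cap N$. Hence $S=\la T,a_1,\dots,a_n\ra$. Let $L$ be the $R$-submodule of $N$ generated by $a_1,\dots,a_n$, where $R$ acts by conjugation. Then $L$ is normalized by all of $H$, in particular by $T$. So $TL$ is a subgroup, and it contains $T$ and every $a_k$, which gives $S\le TL$. Lemma~\ref{lem4} gives $|L|\le m^{nm}$.

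\textbf{Conclusion.} Combining the two bounds,
$$|S|\le |TL|\le |T|\,|L|\le |G|^{|G|^{\log_2|R|}}m^{nm}.$$

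The main point requiring care is the reduction of the quotient to few generators. Naively applying Birkhoff's inequality with $n$ generators gives a bound that grows with $n$. The fix is to pick only $\log_2|R|$ elements of $S$ that cover $SN/N$ and then absorb the remaining generators, corrected into $N$, into a module whose size Lemma~\ref{lem4} controls linearly in $n$ in the exponent.
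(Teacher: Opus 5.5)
Your proof is correct and is essentially the paper's argument: pick at most $\log_2|R|$ elements of $S$ whose images generate $SN/N$, bound the subgroup they generate by Birkhoff's inequality, shift the generators into $N$, and bound the submodule they generate by Lemma~\ref{lem4}. The differences are cosmetic. You take the $R$-submodule generated in $H$ rather than the normal closure in $S$, and you use $S\le TL$ instead of $S=KL$. Your opening remark about splitting $|S|=|SN/N|\cdot|S\cap N|$ is never actually used and could be dropped.
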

\proof One can choose at most $\log_2|R|$ generators, say $x_1,...,x_k$, in $S$ whose natural images generate the subgroup $Q=SN/N\le R$. By multiplying the other $n-k$ generators by
some products of $x_1^{\pm 1},...,x_k^{\pm 1}$, one obtains a set
of generators $\{x_1,..., x_k, y_{k+1},\dots y_n\}$ in $S$, where $y_i\in N$
for $i>k$.

Since the subgroup $K$ generated by $x_1,...,x_k$ belongs to $var\, G$,
the order of $K$ does not exceed  $C=|G|^{|G|^{\log_2 |R|}}$. The normal
closure $L$ of the elements  $y_{k+1},\dots, y_n$ in $S$ is a $Q$-submodule and so its cardinality is at most $m^{(n-k)m}$ by Lemma \ref{lem4}. Thus,
$|S|=|KL|\le |K||L|\le Cm^{nm}$. \endproof

{\bf Proposition 1.} \label{p1} {\em If $M$ is an abelian normal subgroup of a finite
group $G$, then $pot\;G\le |G/M|$.}

\proof Recall that the $n$-generated ($var\,G$)-free group $F_n$ is embedded
in a direct power $G^I$ for some $I$. Denote by $H$ the product $F_n M^I$,  so $H/M^I$
is a subgroup of $T^I$, where $T=G/M$.
For $i\in I$, let $y_{1i},...,y_{ni}$ be the projections of the free generators $x_1,\dots,x_n$
of $F_n$ in the factor $G_i$, and $z_{1i},...,z_{ni}$ be their natural images in the quotients
$G_i/M_i$. We call two indices $i$ and $j$ equivalent if   the $n$-tuples $(z_{1i},...,z_{ni})$
and $(z_{1j},...,z_{nj})$ are equal (if we identify both $T_i$ and $T_j$ with $T$).
So the set $I$ is a disjoint union of at most $|T|^n$ equivalence classes.

For an equivalence class $J$, we denote by $H_J$ the projection of $H$ to the
direct power $G^J$. It follows from the definition of the equivalence
that the projections of the $n$-tuple of the free generators of $F_n$ to $G_j$ 
are equal modulo $M^J$ for all $j\in J$, whence the image of $F_n$ in $G^J/M^J$
is contained in the diagonal of $(G/M)^J$ and the quotient $R=H_J/M^J$ has order at most $|T|$. 
Moreover, the abelian normal subgroups $M_j$ of $H_J$ are isomorphic $R$-modules
for all $j\in J$.

By Lemma \ref{lem5}, the order of the projection $S_J$ of $F_n$ to $H_J$ does
not exceed  $|G|^{|G|^{\log_2 |T|}}|M|^{n|M|}$. Since the number of
equivalence classes $J$ is at most $|T|^n$, and the group $F_n$ is contained
in the direct product of subgroups $H_J$ over all $J$, we have
$$|F_n|\le   (|G|^{|G|^{\log_2 |T|}}|M|^{n|M|})^{|T|^n}.$$
The logarithm of the right-hand side is bounded from above by
$c n|T|^n $, where the constant $c$ does not depend on $n$.
It follows that $pot\;G\le |T|$, as required.
\endproof

The statement of Theorem \ref{th1} follows from the following

\medskip

{\bf Proposition 2.} \label{p2} {\em (Gap for the values of $pot\;G$). Let $G$ be
a non-trivial finite group. Then $pot\;G=|G|$ if $G$ is a monolithic
group whose monolith is non-abelian. Otherwise $pot\;G\le d<|G|$ for
some divisor $d$ of $|G|$.}

\medskip

\proof The first statement of the proposition is proved in Section \ref{s},
the second one follows from Lemma \ref{lem3} and Proposition 1.
\endproof

\section{Proof of Theorem 2}

Note that a finite prime ring $A$ cannot contain two different minimal ideals,
and multiplication is nonzero on the unique minimal ideal. If, for brevity,
rings with zero multiplication are called abelian (as in Lie theory), $A$
is a monolithic ring with a non-abelian monolith $M$.

Such a reformulation makes Theorem \ref{th2} very similar to Theorem \ref{th1},
and the proofs are similar as well. Of course, the variety $var\; A$ contains
rings, not groups; one works with subrings instead of subgroups and with ideals instead of normal subgroups. If the expression 'centralize each other' is replaced by 'annihilate each other on both sides' in the proof of Lemma 1, we obtain the formulation and proof of an analogous Lemma 1' for rings. 

The obvious changes in vocabulary are also sufficient for the formulation and proof
of a Lemma 2' analogous to Lemma 2. (Of course, one refers to Lemma 1' instead of Lemma 1
in the proof.) The last paragraph of Section \ref{s} can be trivially reformulated as well.
The same changes lead to the formulation and proof of Lemma \ref{lem3}.

Let $R$  be a ring, and $N$ be an abelian ideal of $R$. Then for $a\in N$ and $r\in R$, the products $ra$ and $ar$ depend only on the coset $r+N$, and so the left and right
multiplications of $a$ by the elements of $R/N$ are well defined.
Given two abelian ideals $M$ and $M'$ in $R$, we say that
these ideals are {\it similar in} $R$ if there is an isomorphism of their additive
groups $\alpha: M\to M'$ such that $\alpha(rm) =r\alpha(m)$ and $\alpha(mr)=\alpha(m)r$
for any $m\in M$ and $r\in R$. Clearly, the similarity of ideals is an equivalence
relation.

\medskip

{\bf Lemma 4'.} {\em Let $R$ be a ring, and let $N$ be an ideal of $R$ that is a direct sum
$\oplus_iM_i$ of finite abelian ideals $M_i$ of cardinality $m$ that are similar in $R$.
Then any $n$ elements of $N$ are contained in an ideal $L$ of $R$ of cardinality 
at most $m^{nm}$.}

\medskip

The proof follows that of Lemma \ref{lem4}, where $\alpha_{ij}$ is an
$R$-similarity of the ideals $M_i$ and $M_j$, and the minimal ideals containing
$a\in M_i$ and the 'diagonal' element $b$ (as there) are similar in $R$.

\medskip

{\bf Lemma 5'.} \label{lem5} Let $A$ be a finite ring and $N$ an abelian ideal of a ring $H\in var\, A$, \; $N=\oplus_i M_i$ be a direct sum of similar in $H$ ideals $M_i$ of cardinality $m$, and $R=H/N$. Then for every $n\ge 1$, any $n$-generated subring $S$ of $H$  has cardinality at most $|A|^{|A|^{\log_2 |R|}}m^{nm}$.

\medskip

For the proof of Lemma 5', one changes the vocabulary (in particular, the words 'isomorphic modules' are replaced by 'ideals similar in $G$'). In the proof, the element $y_i$ ($i>k$) is obtained as a sum of $x_i$ and an element from the subring $K$ generated by $x_1,\dots,x_k$. Now $L$ is the minimal ideal of $S$ containing $y_{k+1},\dots,y_n$, and $S=K+L$. 

Only obvious changes are needed in the formulations and proofs of Propositions 1' and 2'.
This completes the proof of Theorem \ref{th2}. The author believes that similar
modifications would lead to the proof of an analogue of Theorem 2 for $\Omega$-rings.

\medskip

{\bf Remark 2.} {\em Let $A$ be a finite-dimensional (non-associative) algebra over 
a finite field $\bf k$, and let $F_n$ denote the $n$-generated free algebra in
the variety $var\,A$ of algebras over $\bf k$ generated by $A$. Then one can
formulate an analogue of Theorem \ref{th2} and prove it without any new assumptions.
In this setting, however, it is preferable to consider the dimension
function $d(n) =\dim_{\bf k} F_n$, which is the logarithm of the cardinality
function $|F_n|$. Thus the potential $pot\;A$ of $A$ is $\limsup_{n\to\infty}\sqrt[n]{d(n)}$,
and Theorem \ref{th2} is modified as follows:

Let $A$ be a finite-dimensional (nonassociative) algebra over a finite
field $\bf k$. The equality $pot\;A=|A|$ holds if and only if $A$ is a prime algebra.

Moreover, if $A$ is a prime algebra, then there exist positive constants $c_1$ and $c_2$ such that\\
$c_1|A|^n <\dim_{\bf k} F_n< c_2 |A|^n$ for the $n$-generated free algebras $F_n$ in the variety $var\;A$.}

\section{Examples and questions}

It is worth finding potentials for various classes of finite groups and rings.
By Theorem \ref{th1} and Lemma \ref{lem3}, it suffices to examine monolithic
groups and rings having an abelian monolith.

\medskip

{\bf Example 1.} Let $G$ be a finite dihedral group. If it is a 2-group, then $pot\;G=1$,
because $G$ is nilpotent. Otherwise $pot\;G\ge 2$ as mentioned in the Introduction.
On the other hand, $G$ has an abelian subgroup of index $2$, and therefore 
$pot\;G=2$ in this case by Proposition 1.

\medskip

{\bf Example 2.} Let $G$ be a Miller--Moreno group, that is a non-abelian finite group,
all of whose proper subgroups are abelian. We assume that $G$ is not nilpotent,
since otherwise $pot\;G=1$. Then, for two distinct prime numbers $p$ and $q$,
$G=PQ$, where $P$ is an elementary abelian $p$-group, $Q$ is a cyclic $q$-group,
and $Q^q$ lies in the center of $G$ (see \cite{R}, 9.1.9). So any non-abelian monolithic 
quotient $\bar G$ of $G$ belongs to the variety $\V$ consisting of groups having
a normal elementary abelian $p$-subgroup with an elementary abelian $q$-group as a quotient. Then $\bar G$ generates the variety $\V$ (see \cite{HN}, 54.41).
The order of the free group $F_n$ in $\V$ is equal to $q^np^{(n-1)q^n+1}$ (\cite{HN},
21.13), whence $pot\;G=pot\;\bar G =q.$  For example, $pot\;A_4=3$ for the 
alternating group $A_4$ of degree $4$.

\medskip

{\bf Example 3.} The following product of varieties of associative algebras (no $1$
in the signature) over a field was introduced in the paper of G. Bergman and J. Lewin 
\cite{BL}. The product $\U\V$ of two varieties is defined by the set of all
identities $u(x_1,\dots,x_k)v(x_{k+1},\dots x_{k+\ell}) = 0$, where $u(x_1,\dots,x_k)=0$
is an identity of the variety $\U$ and $v(x_{1},\dots x_{\ell}) = 0$ is
an identity of $\V$. Assume that $\U=var\;A$ and $\V=var\;B$, where $A$ and $B$
are non-zero finite associative algebras over a finite field $\bf k$. Then, by
 Proposition 18 and Theorem 20 of \cite{BO}, the variety $\U\V$ is generated by
 a finite algebra $C$ over $\bf k$ such that $pot\;C =(pot\;A)(pot\;B)$.
 
 \medskip

{\bf Example 4.} Given a finite field $\bf k$, there is only one non-abelian $2$-dimensional
Lie algebra $L$ over $\bf k$ up to isomorphism. It has basis $(e,f)$ such 
that $ef= -fe =f$ and $e^2=f^2 =0$. There is a 1-dimensional monolith $\la f\ra$, and
$pot\;L \le |L/\la f\ra|=|\bf k|$ by Proposition 1'. To obtain the opposite inequality, we construct
an $n$-generated algebra $A$ in $var\; L$ as follows.

Consider the set $V^*$ of $|{\bf k}|^{n-1}$ linear functions $\alpha: V\to \bf k$, where $V$ is an 
$n-1$-dimensional abelian Lie algebra over $\bf k$. For each $\alpha\in V^*$, let $\la f_{\alpha}\ra$ be a $1$-dimensional Lie algebra and define the Lie module $M_{\alpha}=\la f_{\alpha}\ra$
over $V$ by the rule $vf_{\alpha}=\alpha(v)f_{\alpha}$ for $v\in V$. These rules define
the semidirect sum $A$ of $V$ and the ideal $I=\oplus_{\alpha\in V^*} M_{\alpha}$.
The ideal $I$ is generated by one element $\sum_{\alpha}f_{\alpha}$ since the weights
$\alpha$ are pairwise distinct. So $A$ is an $n$-generated Lie algebra of
dimension $n+|{\bf k}|^{n-1}$. It is embedded in a direct power of $L$ whence $A\in var\;L$.
(For each $\alpha$, consider a homomorphism that sends $v$ to $\alpha(v)e$, sends 
$f_{\alpha}$ to $f$, and kills the other weight spaces.)
Since $\lim_{n\to\infty}\sqrt[n]{n+|{\bf k}|^{n-1}} = |{\bf k}|$, we have
$pot\;L\ge |{\bf k}|$.

\medskip 

{\bf Question 1.} Is the potential $pot\;G$ an integer for every finite group $G$?

\medskip

{\bf Question 2.} Does $\lim_{n\to\infty}\sqrt[n]{\log |F_n|}$ exist for
any finite group $G$, where $F_n$ are the $n$-generated free groups in the variety $var\; G$?

\medskip

{\bf Question 3.} In the notation of Question 2, does 
$\lim_{n\to\infty}\frac{\log |F_{n+1}|}{\log |F_n|}$ exist? (Then it is equal
to the limit from Question 2 and to $pot\;G$.)

\medskip

Similar questions for finite algebras have been formulated in \cite{BO}.

If a finite group $G$ has a central subgroup $Z$ and the quotient $G/Z$ is
simple (in particular, if $G$ is quasisimple), then $pot\;G = pot\;(G/Z)$
by Theorem \ref{th1} and Proposition 1. The equality $pot\; G = pot\;(G/Z)=1$ also
holds if $G/Z$ is a non-trivial nilpotent group. So I venture to ask

\medskip

{\bf Question 4.} Let $Z$ be a proper subgroup (subring) of a finite group (ring) $G$
that centralizes (resp., annihilates) $G$. Does the equality $pot\;G = pot\;(G/Z)$ hold?

\medskip

If a group $G$ has an abelian normal subgroup $N$ and $Q = G/N$, there is
a semidirect product $\bar G =QN$, $Q\cap N= \{1\}$, such that the action
by conjugation of $Q$ on $N$ coincides in $G$ and in $\bar G$. The
following construction (depending on $N$) shows that $\bar G =\bar G(N)$ belongs to $var\; G$. 

Let $H$ be the direct product $G_1\times G_2$ of two copies of $G$ and $D$
be the diagonal of $G$. By definition,  $K= DN_1 = DN_2$, where $N_1$ and $N_2$
are copies of $N$. Note that $D\cap N_1=D\cap N_2 = \{1\}$ and $L= D\cap (N_1\times N_2)$
is the diagonal of $N_1\times N_2$. It is a normal subgroup of $K$ since $N$ is abelian.
Denote the quotient $K/L$ by $\bar G$. Then the image $Q$ of $D$ in $K/L =\bar G$ has trivial 
intersection with the image of $N_1$ (= image of $N_2$) in $\bar G$, and $\bar G$
is the required semidirect product.

The same construction defines the semidirect sum $\bar G\in var\; G$ if $G$ is a ring and $N$ is
an abelian ideal of $G$. This leads to the following generalization of Question 4.

\medskip 

{\bf Question 5.} Let $N$ be an abelian normal subgroup (ideal) in a finite
group (resp., ring) $G$. In the above notation, does the equality $pot\;G = pot\;(\bar G)$ hold? 

\medskip

{\bf Acknowledgments}. The author thanks Professor Yu. Bahturin for useful discussions
and the anonymous reviewer for many valuable comments that have helped to improve and
to shorten the presentation.

\end{document}